\author[Roberto Albesiano]{Roberto Albesiano}
\address{Department of Pure Mathematics, University of Waterloo}
\email{roberto.albesiano@uwaterloo.ca}
\urladdr{https://math.uwaterloo.ca/~ralbesiano/}
\title{Generically surjective morphisms of holomorphic vector bundles via degenerations}
\subjclass{32L10, 32E10, 32G08, 32F32}
\keywords{Skoda's $L^2$ division, generically surjective morphisms, degeneration techniques, positivity of direct image bundle}
\date{\today}
\begin{document}

\begin{abstract}
    We prove an $L^2$ theorem on generically surjective morphism of holomorphic vector bundles via a degeneration argument, generalizing the author's previous work on the $L^2$ division theorem of Skoda.  The proof is based on Berndtsson's theorem on the positivity of direct image bundles and is inspired by Berndtsson and Lempert's proof of the $L^2$ extension theorem.
\end{abstract}

\section*{Introduction}
The purpose of this paper is to prove the following $L^2$ theorem on generically surjective holomorphic morphism of holomorphic vector bundles, generalizing the author's previous work on H.~Skoda's $L^2$ division theorem \cite{Albesiano2024}.

Let $X$ be a complex manifold.  If $h$ is a Hermitian metric for a holomorphic vector bundle $V \to X$ with curvature $\Theta_h$, we define the functions $\lambda_h, \Lambda_h : T_X^{1,0} \to \mathbb{R}$ by 
\[
    \lambda_h(\zeta) := \min_{v \in V_x} \frac{h((\Theta_h)_{\zeta\bar{\zeta}}v,\bar{v})}{h(v,\bar{v})}
    \quad\text{and}\quad
    \Lambda_h(\zeta) := \max_{v \in V_x} \frac{h((\Theta_h)_{\zeta\bar{\zeta}}v,\bar{v})}{h(v,\bar{v})}
\]
for all $\zeta \in T_{X,x}^{1,0}$.  To ease notation, we will also write $r_V := \rk V$.

\begin{themainthm}
    Let $X$ be a (essentially) Stein manifold, and let $F,G \to X$ be holomorphic vector bundles with Hermitian metrics $h_F, h_G$, respectively.  Let $F \overset{\gamma}{\longto} G$ be a generically surjective holomorphic morphism of vector bundles.  Fix $\alpha > 1$ and assume that
    \[
        (r_G + 1) \lambda_{h_G} + (r_F - 1)\Tr\Theta_{h_G} - r_G \Tr\Theta_{h_F} \geq (r_F r_G+1 + \alpha(r_F r_G - 1)) (\Lambda_{h_G} - \lambda_{h_F}).
    \]
    Then for any holomorphic section $g$ of $G \otimes K_X \to X$ such that 
    \[
        \norm{g}_G^2 := \int_X \frac{h_G(g,\bar{g})}{(h_F^* \otimes h_G)(\gamma,\bar{\gamma})^{\alpha(r_F r_G - 1)+1}} < +\infty
    \]
    there is a holomorphic section $f$ of $F \otimes K_X \to X$ such that $g = \gamma f$ and 
    \[
        \norm{f}_F^2 := \int_X \frac{h_F(f,\bar{f})}{(h_F^* \otimes h_G)(\gamma,\bar{\gamma})^{\alpha(r_F r_G - 1)}} \leq r_F \frac{\alpha}{\alpha-1} \norm{g}_G^2.
    \]
\end{themainthm}

The philosophy of proof is similar to B.~Berndtsson and L.~Lempert's proof of the $L^2$ extension theorem with sharp constants \cite{BerndtssonLempert2016,Lempert2017}: we write a family of norms that at one end gives the norm of the solution $\norm{f}_F$, and at the other end degenerates to the norm of the datum $\norm{g}_G$, up to a constant.  This family of norms is induced by a family of metrics for the hyperplane bundle over the projectivization of the dual of the bundle $F^* \otimes G \to X$ of morphisms from $F$ to $G$.  The assumptions on curvature together with the vector bundles version of Berndtsson's theorem on the positivity of direct image bundles \cite{Berndtsson2009,Varolin2025} give convexity in the family of norms, which in turns gives the estimate we are looking for.

\subsection*{Two special cases}
Let $L \to X$ be a holomorphic line bundle with Hermitian metric $\e^{-\phi}$.  Then $\gamma$ induces a generically surjective holomorphic morphism $F \otimes L \overset{\gamma}{\longto} G \otimes L$, and the condition on curvature of this twisted problem becomes 
\[
    \I\ddbar\phi + (r_G + 1) \lambda_{h_G} - \Tr\Theta_{h_G} - \Tr\Theta_{h_F \otimes h_G^*} \geq (r_F r_G+1 + \alpha(r_F r_G - 1)) (\Lambda_{h_G} - \lambda_{h_F}).
\]
If $h_G$ and $h_F \otimes h_G^*$ have non-negative curvature in the sense of Griffiths then $\lambda_{h_G} \geq 0$ and $\Lambda_{h_G} - \lambda_{h_F} \leq 0$.  Hence the \mainthmref has the following corollary.

\begin{maincorollary}\label{cor:Skoda}
    Let $X$ be a (essentially) Stein manifold, and let $F,G \to X$ be holomorphic vector bundles with Hermitian metrics $h_F, h_G$, respectively.  Let $F \overset{\gamma}{\longto} G$ be a generically surjective holomorphic morphism of vector bundles and fix $\alpha > 1$.  Assume that $h_G$ and $h_F \otimes h_G^*$ have non-negative curvature in the sense of Griffiths, and that there is a holomorphic line bundle $L \to X$ with Hermitian metric $\e^{-\phi}$ such that 
    \[
        \I\ddbar\phi \geq \Tr\Theta_{h_F \otimes h_G^*} + \Tr \Theta_{h_G}.
    \]
    Then for any holomorphic section $g$ of $G \otimes L \otimes K_X \to X$ such that 
    \[
        \norm{g}_{G \otimes L}^2 = \int_X \frac{h_G(g,\bar{g}) \e^{-\phi}}{(h_F^* \otimes h_G)(\gamma,\bar{\gamma})^{\alpha(r_F r_G - 1)+1}} < +\infty
    \]
    there is a holomorphic section $f$ of $F \otimes L \otimes K_X \to X$ such that $g = \gamma f$ and 
    \[
        \norm{f}_{F \otimes L}^2 = \int_X \frac{h_F(f,\bar{f}) \e^{-\phi}}{(h_F^* \otimes h_G)(\gamma,\bar{\gamma})^{\alpha(r_F r_G - 1)}} \leq r_F \frac{\alpha}{\alpha-1} \norm{g}_{G \otimes L}^2.
    \]
\end{maincorollary}

Compared to the Griffiths-curvature version \cite[Théorème 7]{DemaillySkoda1980} of Skoda's theorem \cite[Théorème 2]{Skoda1981} below (see also \cite[Theorem 11.8]{Demailly2001}), \autoref{cor:Skoda} requires the extra assumptions that the curvatures of $h_G$ and $h_F \otimes h_G^*$ are non-negative in the sense of Griffiths (thus in particular $\Theta_{h_F} \geqGrif 0$), but puts a different requirement on the curvature of $\e^{-\phi}$ and involves different norms.

\begin{theorem*}[\textbf{Demailly--Skoda}]
    Let $X$ be a Stein manifold, and let $F,G \rightarrow X$ be holomorphic vector bundles endowed with Hermitian metrics $h_F$ and $h_G$, respectively.  Let $F \overset{\gamma}{\longto} G$ be a generically surjective holomorphic morphism of vector bundles and let $L \to X$ be a line bundle with Hermitian metric $\e^{-\phi}$.  Set $\alpha>1$ and
    \[
      q := \min(r_F - r_G, \dim X).
    \]
    Assume that $\Theta_{h_F} \geqGrif 0$ and
    \[
      \I\ddbar\phi \geq \Tr \Theta_{h_F} + \alpha q \Tr \Theta_{h_G}.
    \]
    Then, for any holomorphic section $g \in H^0(X, G \otimes L \otimes K_X)$ such that
    \[
      \norm{g}_{G \otimes L}^2 := \int_X \frac{h_G((\gamma\gamma^*)^{-1} g,\bar{g})\e^{-\phi}}{\det(\gamma\gamma^*)^{\alpha q}} < +\infty,
    \]
    there is a holomorphic section $f \in H^0(X, F \otimes E \otimes K_X)$ such that $g = \gamma f$ and
    \[
      \norm{f}_{F \otimes L}^2 := \int_X \frac{h_F(f,f)\e^{-\phi}}{\det(\gamma\gamma^*)^{\alpha q}} \leq \frac{\alpha}{\alpha-1} \norm{g}_{G \otimes L}^2.
    \]
    Here $\gamma^*: G \to F$ is the adjoint of $\gamma$ with respect to $h_F$ and $h_G$.
\end{theorem*}

At least when $r_G = 1$, \autoref{cor:Skoda} suffers the same pathologies encountered in \cite{Albesiano2024}: the constant $r_F \frac{\alpha}{\alpha-1}$ should be $\frac{\alpha}{\alpha-1}$, and the quantity $r_F r_G - 1$ should be replaced by $q r_G$.  At the moment, it is not clear to the author whether these deficiencies are caused by a lack of sharpness in the argument below, or whether they are instead a feature of the degeneration technique used.  Either way, as in \cite{Albesiano2024} the main emphasis is on the technique involved in the proof of the \mainthmref, rather than the statement itself.

The second special case recovers the main result of \cite{Albesiano2024} without the upper bound on the parameter $\alpha > 1$.  Let $E \to X$ be a holomorphic line bundle with Hermitian metric $\e^{-\varphi}$, and assume that $G \to X$ is also of rank one, with metric $h_G = \e^{-\psi}$.  Fix $h_1,\dots,h_r$ holomorphic sections of $E^* \otimes G \to X$.  By taking $F := E^{\oplus r}$ and $h_F = \Id_F \e^{-\varphi}$ we have 
\[
\begin{split}
    &(r_G + 1)\lambda_{h_G} + (r_F - 1) \Tr\Theta_{h_G} - r_G \Tr\Theta_{h_F} - (r_F r_G+1 + \alpha(r_F r_G - 1)) (\Lambda_{h_G} - \lambda_{h_F}) \\
    &\quad= \I\ddbar \left[2\psi + (r-1) \psi - r\varphi - (r+1 + \alpha(r-1))(\psi - \varphi)\right] \\
    &\quad= \left(\alpha(r-1)+1\right) \I\ddbar\varphi - \alpha(r-1)\I\ddbar\psi.
\end{split}
\]
All together this gives the following line bundle version of the \mainthmref.
\begin{maincorollary}
    Let $X$ be a (essentially) Stein manifold, and let $E,G \to X$ be holomorphic line bundles with (singular) Hermitian metrics $\e^{-\varphi}, \e^{-\psi}$, respectively.  Fix $h_1,\dots,h_r$ holomorphic sections of $E^* \otimes G \to X$ and $\alpha>1$.  Assume that 
    \[
        \I\ddbar\varphi \geq \frac{\alpha(r-1)}{\alpha(r-1)+1} \I\ddbar\psi.
    \]
    Then for any holomorphic section $g$ of $G \otimes K_X \to X$ such that 
    \[
        \norm{g}_G^2 = \int_X \frac{|g|^2 \e^{-\psi}}{(|h|^2 \e^{-\psi+\varphi})^{\alpha(r-1)+1}}
    \]
    there are holomorphic sections $f_1,\dots,f_r$ of $E \otimes K_X \to X$ such that
    \[
        g = h_1 f_1 + \dots + h_r f_r
    \]
    and 
    \[
        \norm{f}_F^2 = \int_X \frac{|f|^2 \e^{-\varphi}}{(|h|^2 \e^{-\psi+\varphi})^{\alpha(r-1)}} \leq r \frac{\alpha}{\alpha-1} \norm{g}_G^2.
    \]
\end{maincorollary}

The removal of the artificial upper bound on $\alpha$ is achieved in the proof of the \mainthmref by more carefully computing the curvature of the weights involved in the degeneration argument (see \autoref{rmk:alphaUpperBound}).

\subsection*{Organization}
In \autoref{sec:preliminaries}, after some general preliminary observations about the measures of curvature $\lambda$ and $\Lambda$, we reduce the manifold to a bounded pseudoconvex domain where $\gamma$ is surjective, and prove that with such reductions there is a solution with finite $L^2$ norm.  We then restate the problem in a dual setting.  In \autoref{sec:family} we set up the degenerating family of norms that produces the estimate, and compute what happens at its extrema.  In \autoref{sec:curvature} we study the curvature of the family of norms, and finally in \autoref{sec:convexity} we conclude the proof of the \mainthmref by running the Berndtsson--Lempert argument on the family.

\subsection*{Acknowledgements}
I am grateful to Dror Varolin for helpful observations on a first draft of this paper.

\section{Preliminaries}\label{sec:preliminaries}
\subsection{Curvature}
Let $V \to X$ be a holomorphic vector bundle, and let $h$ be a Hermitian metric for $V$.  Fix a Hermitian metric $\mathfrak{g}$ on $X$.  The curvature $\Theta_h$ of the metric $h$ is Griffiths-positive at $x \in X$ if there is $c>0$ such that 
\[
    \frac{h((\Theta_h)_{\zeta\bar{\zeta}}v,\bar{v})}{h(v,\bar{v})} \geq c \mathfrak{g}(\zeta,\bar{\zeta})
\]
for all $v \in V_x \setminus \{0\}$ and $\zeta \in T_{X,x}^{1,0}$.  For any metric $h$ for $V \to X$ we define the functions of $\zeta \in T_{X,x}^{1,0}$ given by
\[
    \lambda_h(\zeta) := \min_{v \in V_x} \frac{h((\Theta_h)_{\zeta\bar{\zeta}}v,\bar{v})}{h(v,\bar{v})}
    \quad\text{and}\quad
    \Lambda_h(\zeta) := \max_{v \in V_x} \frac{h((\Theta_h)_{\zeta\bar{\zeta}}v,\bar{v})}{h(v,\bar{v})}.
\]
In particular, $h$ is Griffiths-positive at $x \in X$ if $\lambda_h(\zeta) > 0$ for all $\zeta \in T_{X,x}^{1,0}$, and Griffiths-negative at $x \in X$ if $\Lambda_h(\zeta) < 0$ for all $\zeta \in T_{X,x}^{1,0}$.

Notice that the ratio \(h((\Theta_h)_{\zeta\bar{\zeta}}v,\bar{v})/h(v,\bar{v})\) is invariant under scaling of $v$ by non-zero complex numbers, and so it is a well-defined function on $P(V_x)$.

Since
\[
    h^*((\Theta_{h^*})_{\zeta\bar{\zeta}}v^*,\bar{v}^*) = -h((\Theta_h)_{\zeta\bar{\zeta}}v,\bar{v}),
\]
we immediately obtain
\[
    \lambda_{h^*} = -\Lambda_h \quad \text{and} \quad \Lambda_{h^*} = -\lambda_h.
\]

If $V_1, V_2 \to X$ are holomorphic vector bundles with Hermitian metrics $h_1,h_2$, respectively, one has
\[
    \Theta_{h_1 \otimes h_2} = \Theta_{h_1} \otimes \Id_{V_2} + \Id_{V_1} \otimes \Theta_{h_2},
\]
and so by \autoref{lemma:linearAlgebra} below we conclude that
\[
    \lambda_{h_1 \otimes h_2} = \lambda_{h_1} + \lambda_{h_2} \quad \text{and} \quad \Lambda_{h_1 \otimes h_2} = \Lambda_{h_1} + \Lambda_{h_2}.
\]

\begin{lemma}\label{lemma:linearAlgebra}
    Let $A_1: W_1 \to W_1$ and $A_2: W_2 \to W_2$ be Hermitian matrices and set 
    \[
    A := A_1 \otimes \Id_{W_2} + \Id_{W_1} \otimes A_2.
    \]
    Then the spectrum of $A$ is
    \[
        \spec(A) = \spec(A_1) + \spec(A_2) := \left\{ \mu_1 + \mu_2 \, \middle| \, \mu_1 \in \spec(A_1), \mu_2 \in \spec(A_2) \right\}.
    \]
    In particular, the maximal (minimal) eigenvalue of $A$ is the sum of the maximal (minimal) eigenvalues of $A_1$ and $A_2$.
\end{lemma}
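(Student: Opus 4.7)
The plan is to diagonalize both $A_1$ and $A_2$ simultaneously via a tensor product eigenbasis.  Since $A_1$ and $A_2$ are Hermitian, the spectral theorem provides orthonormal bases $\{e_i\}_{i=1}^{\dim W_1}$ of $W_1$ and $\{f_j\}_{j=1}^{\dim W_2}$ of $W_2$ with $A_1 e_i = \mu_i e_i$ and $A_2 f_j = \nu_j f_j$, where $\mu_i \in \spec(A_1)$ and $\nu_j \in \spec(A_2)$.  The family $\{e_i \otimes f_j\}$ then forms an orthonormal basis of $W_1 \otimes W_2$.

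Next I would compute directly, using the definition of $A$ and the bilinearity of the tensor product, that
\[
    A(e_i \otimes f_j) = (A_1 e_i) \otimes f_j + e_i \otimes (A_2 f_j) = (\mu_i + \nu_j) \, e_i \otimes f_j.
\]
Hence each $e_i \otimes f_j$ is an eigenvector of $A$ with eigenvalue $\mu_i + \nu_j$.  Because these vectors form a basis, they exhaust the spectrum of $A$, giving $\spec(A) = \spec(A_1) + \spec(A_2)$.  The statement about extremal eigenvalues follows immediately by taking maxima or minima over the two index sets independently.

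There is no real obstacle here: the result is standard and the proof is essentially a one-line computation on a tensor product basis.  The only point worth flagging is that the Hermitian hypothesis is used precisely to guarantee diagonalizability in orthonormal bases, which is what makes the tensor basis both spectral and complete; for non-Hermitian operators one would only get inclusion of sum-sets in the spectrum.
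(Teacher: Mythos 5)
Your proof is correct and is essentially the same argument as the paper's: both diagonalize $A_1$ and $A_2$ (using Hermiticity) and observe that $A$ becomes diagonal in the tensor-product basis with entries $\mu_i + \nu_j$. The only cosmetic difference is that the paper phrases it in terms of diagonal matrices while you phrase it as an eigenvector computation.
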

\begin{proof}
    Since $A_1$ and $A_2$ are Hermitian we can choose a basis in which they are diagonal.  Then $A_1 \otimes \Id_{W_2}$ and $\Id_{W_1} \otimes A_2$ are again diagonal matrices, with diagonal entries equal to the ones of $A_1$ and $A_2$, respectively.  Then $A$ is also diagonal, and its diagonal entries are all possible sums $\mu_1 + \mu_2$ with $\mu_i$ a diagonal entry of $A_i$.
\end{proof}

\subsection{No degeneracy locus}
As in \cite{Albesiano2024}, we can assume that the morphism $\gamma$ is in fact surjective.  Indeed, the degeneracy locus
\[
    \{ x \in X \mid \gamma_x: F_x \to G_x \text{ is not surjective} \}
\]
is the zero set of the holomorphic section $\gamma^{\wedge r_G}$ of $V := (F^*)^{\wedge r_G} \otimes \det G$, and thus it is contained in the zero set $Z$ of the section $\det (\gamma^{\wedge r_G})$ of the holomorphic line bundle $\det V$.  Assuming that the \mainthmref holds for the Stein manifold $X \setminus Z$, we obtain a holomorphic section $f_0$ of $(F \otimes K_X)|_{X \setminus Z}$ such that $g = \gamma f_0$ and 
\[
    \int_{X \setminus Z} \frac{h_F(f_0,\bar{f_0})}{(h_F^* \otimes h_G)(\gamma,\bar{\gamma})^{\alpha(r_F r_G - 1)}} \leq r_F \frac{\alpha}{\alpha-1} \norm{g}_G^2 < +\infty.
\]
Since $(h_F^* \otimes h_G)(\gamma,\bar{\gamma})$ is bounded on any bounded chart, by Riemann's theorem on removable singularities it follows that $f_0$ extends across $Z$ to a holomorphic section $f$ of $F \otimes K_X$ with $\norm{f}_G^2 = \norm{f_0}_G^2$.  One has $\gamma f = g$ because $\gamma f$ and $g$ coincide on the open set $X \setminus Z$.

\begin{remark}
    The same argument proves the \mainthmref when $X$ is essentially Stein, provided it has been proved for Stein manifolds.
\end{remark}

From now on we then always assume that $X$ is Stein and $F \overset{\gamma}{\to} G$ is surjective.

\subsection{Existence of a finite-norm solution, and the dual problem}
We can reduce $X$ to a bounded pseudoconvex domain inside some larger Stein manifold $Y$, where $F,G,\gamma,g$ extend holomorphically and $\mathfrak{g},h_G,h_F$ extend smoothly.  Because the constant $r_F \frac{\alpha}{\alpha-1}$ in the bound of the \mainthmref does not depend on anything except for the rank of $F$ and the fixed constant $\alpha$, if the \mainthmref is proved under these hypotheses then the general case is obtained by standard weak-$*$ compactness arguments.

\begin{lemma}\label{prop:existence}
    Let $g$ be a holomorphic section of $G \otimes K_X$.  There exists a holomorphic section $\tilde{f}$ of $F \otimes K_X$ such that $\gamma \tilde{f} = g$ and $\norm{\tilde{f}}_F^2 < +\infty$.
\end{lemma}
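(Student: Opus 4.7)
The lemma only asks for \emph{some} finite-norm lift, so the idea is to combine a soft existence result (Cartan's Theorem B on the Stein domain $X$) with the elementary fact that, on the bounded pseudoconvex domain to which we have reduced, everything in sight is smooth and $\gamma$ is uniformly non-degenerate.  I therefore view the proof as a two-step affair: first produce any holomorphic section $\tilde f$ lifting $g$, then control its norm by bounding the integrand pointwise.

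\textbf{Step 1 (Existence of a holomorphic lift).}  Because $\gamma$ is surjective and holomorphic on $X$, the kernel $K:=\ker\gamma$ is a holomorphic subbundle of $F$ and we have the short exact sequence of locally free sheaves
\[
    0 \to K \otimes K_X \to F \otimes K_X \xrightarrow{\gamma} G \otimes K_X \to 0.
\]
The bounded pseudoconvex domain $X$ is Stein, so Cartan's Theorem B gives $H^1(X,K\otimes K_X)=0$.  The induced long exact sequence therefore shows that the map on global sections is already surjective, so $g$ lifts to some $\tilde f \in H^0(X, F \otimes K_X)$ with $\gamma\tilde f = g$.

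\textbf{Step 2 (Finite norm).}  The previous paragraph of the paper already places $X$ relatively compactly inside a larger Stein manifold $Y$ to which $F,G,\gamma,g$ extend holomorphically.  Because the degeneracy locus of $\gamma$ has been removed at an earlier stage, we may shrink so that $\gamma$ is surjective on a neighborhood of $\bar X$; running Step 1 on a slightly larger bounded pseudoconvex domain $X'$ with $\bar X \subset X' \subset Y$ produces a lift $\tilde f$ that is holomorphic on $X'$, hence smooth and bounded on $\bar X$.  Since $\gamma$ is surjective on $\bar X$, the continuous function $(h_F^*\otimes h_G)(\gamma,\bar\gamma)$ is bounded below by a positive constant there, so the weight $(h_F^*\otimes h_G)(\gamma,\bar\gamma)^{-\alpha(r_F r_G-1)}$ is bounded above.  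Combined with the boundedness of $h_F(\tilde f,\bar{\tilde f})$ on the relatively compact $X\subset X'$, this makes the integrand in $\norm{\tilde f}_F^2$ bounded, and the integral finite.

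\textbf{Main obstacle.}  The only subtlety is integrability of the weight: on the original manifold $(h_F^*\otimes h_G)(\gamma,\bar\gamma)^{-\alpha(r_Fr_G-1)}$ is singular along the degeneracy locus of $\gamma$, and a priori that locus could accumulate at $\partial X$.  The reduction to a bounded pseudoconvex subdomain is precisely what puts the degeneracy locus at positive distance from $\bar X$ and makes the weight bounded.  An alternative route that bypasses Cartan B altogether is to take the smooth Moore--Penrose lift $f_0 := \gamma^*(\gamma\gamma^*)^{-1}g$ (well defined on $X$ by surjectivity of $\gamma$) and correct it by solving $\bar\partial u = \bar\partial f_0$ with $u$ a smooth section of $\ker\gamma \otimes K_X$ via Hörmander's $L^2$ theorem on the bounded pseudoconvex domain $X$; then $\tilde f := f_0 - u$ is holomorphic, satisfies $\gamma\tilde f = g$, and has finite norm under the same boundedness of $(h_F^*\otimes h_G)(\gamma,\bar\gamma)$ on $\bar X$.
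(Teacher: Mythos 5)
Your proof is correct and follows essentially the same route as the paper: both apply Cartan's Theorem B to the short exact sequence $0 \to \ker\gamma \otimes K_X \to F \otimes K_X \to G \otimes K_X \to 0$ over a Stein manifold strictly containing $\bar X$, and then observe that the resulting holomorphic lift automatically has finite $L^2$ norm on the relatively compact $X$ since the weight $(h_F^*\otimes h_G)(\gamma,\bar\gamma)^{-\alpha(r_F r_G-1)}$ is bounded there. The Hörmander/Moore--Penrose alternative you sketch at the end is a valid bypass, but it is not the paper's route.
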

\begin{proof}
    Since $X$ is relatively compact in $Y$, any section $f$ of $F \otimes K_X$ on $Y$ with $\gamma f = g$ automatically has finite $L^2$ norm when restricted to $X$.  Therefore it suffices to show that there is a solution on $Y$.

    By the surjectivity of $\gamma$ we have the short exact sequence of holomorphic vector bundles 
    \[
        0 \longto \ker\gamma \longto F \otimes K_X \overset{\gamma}{\longto} G \otimes K_X \longto 0.
    \]
    The induced long exact sequence in cohomology then gives 
    \[
        H^0(Y, F \otimes K_X) \longto H^0(Y, G \otimes K_X) \longto H^1(Y, \ker\gamma) = 0,
    \]
    where the last term vanishes by Cartan's Theorem B because $\ker\gamma$ is a holomorphic vector bundle over the Stein manifold $Y$.
\end{proof}

Naturally we have no control on the norm of this solution $\tilde{f}$, which could blow up as $X$ exhaust the original Stein manifold we started with.  Nonetheless, because there is a solution $\tilde{f}$ with finite $L^2$ norm, it follows that there is a (unique) solution $f$ with minimal $L^2$ norm.  In order to prove the \mainthmref we then need to estimate $\norm{f}_F^2$.

\begin{lemma}\label{lemma:dualization}
    Let $\tilde{f}$ be any solution to $\gamma \tilde{f} = g$ with $\norm{\tilde{f}}_F^2 < +\infty$.  Then the norm of the minimal-$L^2$-norm solution $f$ is 
    \[
        \norm{f}_F^2 = \sup_{\eta \in C^\infty_c(X, G \otimes K_X)} \frac{|\xi_\eta(\tilde{f})|^2}{\norm{\xi_\eta}_{F,*}^2},
    \]
    where the supremum runs over all smooth compactly supported sections $\eta$ of $G \otimes K_X$, and
    \[
        \xi_\eta(s) := \int_X \frac{h_G(\gamma(s),\bar{\eta})}{(h_F^* \otimes h_G)(\gamma,\bar{\gamma})^{\alpha(r_F r_G - 1) + 1}}.
    \]
\end{lemma}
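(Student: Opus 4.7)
The plan is to interpret the functionals $\xi_\eta$ in terms of the Hilbert-space adjoint of $\gamma$, so that the claimed identity reduces to the Riesz representation theorem applied inside the orthogonal complement of $\ker\gamma$.

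First I would equip the space of $L^2$ sections of $F \otimes K_X$ (resp.\ $G \otimes K_X$) with the weighted inner product whose associated norm is $\norm{\cdot}_F$ (resp.\ $\norm{\cdot}_G$); call the resulting Hilbert spaces $H_F$ and $H_G$.  The pointwise operator inequality $h_G(\gamma s, \overline{\gamma s}) \leq (h_F^* \otimes h_G)(\gamma,\bar{\gamma}) \cdot h_F(s,\bar{s})$ shows that multiplication by $\gamma$ defines a bounded linear map $\gamma : H_F \to H_G$; let $\gamma^\dagger : H_G \to H_F$ denote its Hilbert adjoint.  Unwinding definitions gives
\[
    \xi_\eta(s) = \langle \gamma s, \eta \rangle_G = \langle s, \gamma^\dagger \eta \rangle_F
\]
for every test section $\eta$ and every $s \in H_F$.

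Two consequences follow at once.  Since $\xi_\eta$ factors through $\gamma$, and since $\gamma(\tilde{f} - f) = g - g = 0$, we have $\xi_\eta(\tilde{f}) = \xi_\eta(f)$; and by Riesz representation $\xi_\eta$ is represented by the vector $\gamma^\dagger \eta$, so $\norm{\xi_\eta}_{F,*} = \norm{\gamma^\dagger \eta}_F$.  Therefore the right-hand side of the lemma becomes
\[
    \sup_{\eta} \frac{|\langle f, \gamma^\dagger \eta\rangle_F|^2}{\norm{\gamma^\dagger \eta}_F^2},
\]
which is bounded above by $\norm{f}_F^2$ by Cauchy--Schwarz.

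For the matching lower bound I would invoke the standard decomposition $(\ker\gamma)^\perp = \overline{\gamma^\dagger(H_G)}$.  Since the minimal-norm solution $f$ is orthogonal to $\ker\gamma$, it belongs to this closure, so there exist $\eta_n \in H_G$ with $\gamma^\dagger \eta_n \to f$ in $H_F$.  By density of $C^\infty_c(X, G \otimes K_X)$ in $H_G$ and continuity of $\gamma^\dagger$ we may moreover take each $\eta_n$ smooth with compact support.  Evaluating the ratio on $\gamma^\dagger \eta_n$ and letting $n \to \infty$ yields $\norm{f}_F^4/\norm{f}_F^2 = \norm{f}_F^2$ (the case $f = 0$ being trivial).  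The only technical points to verify are the boundedness of $\gamma : H_F \to H_G$ and the density of $C^\infty_c$ in $H_G$; both are routine consequences of the earlier reduction to a bounded pseudoconvex domain on which $h_F$, $h_G$, and $(h_F^* \otimes h_G)(\gamma,\bar{\gamma})$ are uniformly bounded above and below.
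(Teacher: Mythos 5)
The proposal has a genuine gap: it conflates the Hilbert space $H_F$ of all weighted $L^2$ sections of $F\otimes K_X$ with the Bergman subspace $\mathcal{A}_F$ of \emph{holomorphic} $L^2$ sections, and the two places where this happens are exactly the two places where the argument needs to carry weight.

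First, the dual norm $\norm{\xi_\eta}_{F,*}$ in the lemma is the norm of $\xi_\eta$ as a functional on $\mathcal{A}_F$ (this is how it is used throughout the paper, e.g.\ in \eqref{eq:dualProblem} and in \autoref{sec:convexity} where the test objects $\hat{s}$ come from holomorphic $s$). So the Riesz representative of $\xi_\eta$ is the Bergman projection $P_F\gamma^\dagger\eta$, not $\gamma^\dagger\eta$, and $\norm{\xi_\eta}_{F,*} = \norm{P_F\gamma^\dagger\eta}_F \le \norm{\gamma^\dagger\eta}_F$, with strict inequality in general. Your upper bound $\sup_\eta |\langle f,\gamma^\dagger\eta\rangle_F|^2/\norm{\gamma^\dagger\eta}_F^2 \le \norm{f}_F^2$ is a true inequality (Cauchy--Schwarz in $H_F$), but it bounds a quantity that is \emph{smaller} than the supremum appearing in the lemma, so it does not give the needed direction. (The correct upper bound works once you replace $\gamma^\dagger\eta$ by $P_F\gamma^\dagger\eta$.)

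Second, and more seriously, the lower bound invokes $(\ker\gamma)^\perp = \overline{\gamma^\dagger(H_G)}$ \emph{inside $H_F$}, and asserts that $f$ lies in that closure because $f$ is orthogonal to $\ker\gamma$. But the minimal-norm solution $f$ is orthogonal to $H^0(X,\ker\gamma)$ (holomorphic $L^2$ sections valued in $\ker\gamma$) inside $\mathcal{A}_F$; it is \emph{not} orthogonal to $\ker(\gamma\colon H_F\to H_G)$, which is vastly larger (it contains e.g.\ any bump function times a holomorphic kernel section). So $f$ need not lie in $\overline{\gamma^\dagger(H_G)}$, and you cannot approximate $f$ by $\gamma^\dagger\eta_n$ in $H_F$. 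The correct statement is $f\in\overline{P_F\gamma^\dagger(H_G)}$, the closure taken in $\mathcal{A}_F$, because $P_F\gamma^\dagger$ is the adjoint of $\gamma|_{\mathcal{A}_F}\colon\mathcal{A}_F\to H_G$ and $\ker(\gamma|_{\mathcal{A}_F})=H^0(X,\ker\gamma)$. Once both corrections are made, your argument does go through and is essentially the same as the paper's: the paper shows orthogonality of $f$ to $H^0(X,\ker\gamma)$ by first variation of the norm, and then shows the $\xi_\eta$ are dense in the annihilator by checking that their common kernel is exactly $H^0(X,\ker\gamma)$; this last density claim is the Riesz-side incarnation of $(H^0(X,\ker\gamma))^\perp = \overline{P_F\gamma^\dagger(H_G)}$.
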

The proof is an almost direct rewriting of the proof of Lemma 5.2 in \cite{Albesiano2024}.  We reproduce it here for completeness.
\begin{proof}
    We first show that 
    \begin{equation}\label{eq:annNorm}
        \norm{f}_F^2 = \sup_{\xi \in \Ann H^0(X,\ker\gamma)} \frac{|\xi(\tilde{f})|^2}{\norm{\xi}_{F,*}^2},
    \end{equation}
    where $\Ann H^0(X,\ker\gamma)$ is the space of linear functionals over $L^2$ holomorphic sections of $F \otimes K_X$ that vanish on sections of $\ker\gamma$ (seen as sections of $F \otimes K_X$).  Notice that $\xi(\tilde{f})$ is then independent of the choice of solution $\tilde{f}$ to $\gamma \tilde{f} = g$.

    Since $f$ is the minimal norm solution, for any holomorphic section $k$ of $\ker\gamma$ and $\varepsilon \in \mathbb{C}$ the section $f + \varepsilon k$ is a solution and the function 
    \[
        \mathbb{C} \ni \varepsilon \longmapsto \norm{f + \varepsilon k}_F^2 = \norm{f}_F^2 + 2\Re\left[ \varepsilon (f,k)_F \right] + O(|\varepsilon|^2)
    \]
    has minimum at $\varepsilon = 0$.  Hence $(f,k)_F = 0$, so that $f$ is perpendicular to the image of $H^0(X,\ker\gamma)$ in $H^0(X, F \otimes K_X)$, proving \eqref{eq:annNorm}.

    To conclude, note that clearly $\xi_\eta \in \Ann H^0(X,\ker\gamma)$, and that conversely if $\xi_\eta(s) = 0$ for all smooth compactly supported $\eta$ then $\gamma(s) = 0$.  Therefore the $\xi_\eta$'s are dense in $\Ann H^0(X,\ker\gamma)$, and we may thus restrict to such elements in \eqref{eq:annNorm}.
\end{proof}

By \autoref{lemma:dualization} and Cauchy--Schwarz we then have 
\begin{equation}\label{eq:dualProblem}
    \norm{f}_F^2 \leq \norm{g}_G^2 \sup_{\eta \in C^\infty_c(X, G \otimes K_X)} \frac{\norm{\mathcal{P}\eta}_G^2}{\norm{\xi_\eta}_{F,*}^2},
\end{equation}
where $\mathcal{P}$ is the Bergman projection to $L^2$-integrable (with respect to $\norm{\cdot}_G^2$) holomorphic sections of $G \otimes K_X$.  Hence the \mainthmref reduces to proving 
\[
    \norm{\mathcal{P}\eta}_G^2 \leq r_F \frac{\alpha}{\alpha-1} \norm{\xi_\eta}_{F,*}^2
\]
for all smooth and compactly supported sections $\eta$ of $G \otimes K_X$.

\section{The family of norms}\label{sec:family}
Denote by $\pr_X: P(F^* \otimes G) \to X$ the bundle projection, set $r := \rk(F^* \otimes G)= r_F r_G$, and set $h := h_F^* \otimes h_G$.  We work with the vector bundle 
\[
    V := \pr_X^* (G \otimes K_X) \otimes \mathcal{O}(1) \longto P(F^* \otimes G),
\]
endowed with the family of metrics $\mathfrak{h}_\tau$ defined below and parametrized by 
\[
    \tau \in \mathbb{L} := \{ \Re z \leq 0 \}.
\]
For a section $\sigma$ of $V \to P(F^* \otimes G)$ set
\[
    \mathfrak{h}_\tau(\sigma,\bar{\sigma}) := \frac{r_F}{V_{\Re\tau}} \frac{h_G(\sigma_v, \bar{\sigma}_v)}{h(v,\bar{v})} \left( h(v,\bar{v}) \e^{-\chi_\tau} \right)^{\alpha(r-1)},
\]
where $V_t = \frac{\pi^{r-1}}{(r-1)!} \e^{(r-1)t}$ is the volume of the ball of radius $\e^{t/2}$ in $\mathbb{R}^{2r-2}$ and
\[
    \chi_\tau := \max\left( \log\left[h(\gamma,\bar{\gamma}) h(v,\bar{v}) - |h(\gamma,\bar{v})|^2\right] - \Re\tau, \, \log\left[h(\gamma,\bar{\gamma})h(v,\bar{v})\right] \right).
\]
Notice that whether the maximum is attained by the left-hand side or the right-hand side does not depend on the choice of representative of $[v] \in P(F^* \otimes G)$, and that the weight $h(v,\bar{v}) \e^{-\chi_\tau}$ is indeed a well-defined function on $P(F^* \otimes G)$.

A section $s$ of $F \to X$ lifts to the section $\hat{s}(v) := vs$ of $V \to P(F^* \otimes G)$ (in the same way as section of a vector bundle $E \to Y$ are lifted to sections of $\mathcal{O}(1) \to P(E^*)$).

\begin{remark}
    We think of $P(F^* \otimes G)$ as parametrizing all pointwise morphisms of $F$ to $G$ up to scaling.  Notice that sections of $\mathcal{O}(1)|_{\pr_X^{-1}(x)} \to P(F^* \otimes G)|_{\pr_X^{-1}(x)}$ are identified with vectors in $(F \otimes G^*)_x$.
\end{remark}

In order to define a family of $L^2$ norms induced by $\mathfrak{h}_\tau$ we need to introduce a Fubini--Study volume form on the fibers of $P(F^* \otimes G) \to X$.  Since
\[
    \omega := \I\ddbar\log h(v,\bar{v})
\]
is a positive $(1,1)$-form when restricted (via the pullback of the inclusion) to each fiber of $P(F^* \otimes G) \to X$ we can use such $(1,1)$ form to define the volume form on each fiber as $(\iota_x^*\omega)^{\wedge (r - 1)}$, where $\iota_x: P(F^* \otimes G)_x \hookrightarrow P(F^* \otimes G)$ is the inclusion of the fiber in the total space of the fibration.%

We then have an induced family of norms given by
\[
    \norm{\sigma}_\tau^2 := \frac{r_F}{V_{\Re\tau}} \int_{[v] \in P(F^* \otimes G)} \frac{h_G(\sigma_v, \bar{\sigma}_v)}{h(v,\bar{v})} \left( h(v,\bar{v}) \e^{-\chi_\tau} \right)^{\alpha(r-1)} \wedge (\iota_x^*\omega)^{\wedge (r - 1)}
\]
(for ease of notation, here and in the following $x = \pr_X(v)$).

Since $\chi_\tau$, $\mathfrak{h}_\tau$, and $\norm{\cdot}_\tau$ clearly only depend on $t = \Re\tau$, in the following we will write $\chi_t$, $\mathfrak{h}_t$, and $\norm{\cdot}_t$, respectively, with the understanding that $t \in (-\infty,0]$.

We now proceed to study the extrema of the family of norms.  For $t = 0$ we have
\[
    \chi_0 = \log\left[h(\gamma,\bar{\gamma})h(v,\bar{v})\right]
\]
and so 
\[
    \norm{\hat{s}}_0^2 = \frac{r_F}{V_0} \int_{[v] \in P(F^* \otimes G)} \frac{h_G(vs, \overline{vs})}{h(v,\bar{v}) h(\gamma,\bar\gamma)^{\alpha(r-1)}} \wedge (\iota_x^*\omega)^{\wedge (r - 1)}.
\]
Fix $x \in X$ and choose local coordinates $e_F^1, \dots, e_F^{r_F}$ and $e_G^1, \dots, e_G^{r_G}$ for the vector spaces $F_x$ and $G_x$, respectively, so that $(h_F)_x$ and $(h_G)_x$ are given by the identity matrix.  Write $s = s_j e_F^j$ and $v = v^j_k e^*_{F,j} e_G^k$.  Then $vs = v^j_k s_j e_G^k$ and the integral on $P(F^* \otimes G)_x$ becomes
\[
    \frac{1}{h(\gamma,\bar\gamma)^{\alpha(r-1)}} \int_{[\{v^j_k\}] \in \mathbb{P}_{r-1}} \frac{\sum_{k=1}^{r_G}|\sum_{j=1}^{r_F} v^j_k s_j|^2}{|v|^2} \wedge \dVFS.
\]
By symmetry we have 
\[
\begin{split}
    &\int_{[\{v^j_k\}] \in \mathbb{P}_{r-1}} \frac{\sum_{k=1}^{r_G}|\sum_{j=1}^{r_F} v^j_k s_j|^2}{|v|^2} \wedge \dVFS \\
    &\quad= r_G \int_{[\{v^j_k\}] \in \mathbb{P}_{r-1}} \frac{|\sum_{j=1}^{r_F} v^j_k s_j|^2}{|v|^2} \wedge \dVFS = r_G \int_{[\{v^j_k\}] \in \mathbb{P}_{r-1}} \frac{|v \cdot \tilde{s}|^2}{|v|^2} \wedge \dVFS,
\end{split}
\]
where $\tilde{s}$ is the vector whose first $r_F$-entries are given by $s$, and the remaining $r - r_F = (r_G - 1)r_F$ are zero.  As in \cite{Albesiano2024}, a computation in polar coordinates then gives
\[
    r_G \int_{[\{v^j_k\}] \in \mathbb{P}_{r-1}} \frac{|v \cdot \tilde{s}|^2}{|v|^2} \wedge \dVFS = \frac{\pi^{r-1}}{r!} r_G |\tilde{s}|^2 = \frac{V_0}{r_F} h_F(s,\bar{s}).
\]
Hence we conclude that
\[
    \norm{\hat{s}}_0^2 = \int_X \frac{h_F(s,\bar{s})}{h(\gamma,\bar\gamma)^{\alpha(r-1)}} = \norm{s}_F^2.
\]

Next we study what happens in the limit $t \to -\infty$.  For this purpose, fix $x \in X$ and $t<0$, and consider the subset $A_{t,x}$ of the fiber $P(F^* \otimes G)_x$ given by the region where the maximum is attained by the right-hand side, namely:
\[
    A_{t,x} := \left\{ v \in P(F^* \otimes G)_x \, \middle| \, 1 - \frac{|h(\gamma,\bar{v})|^2}{h(\gamma,\bar\gamma)h(v,\bar{v})} \leq \e^t \right\}.
\]
For each fixed $x$ we choose homogeneous coordinates $[v_1:\dots:v_r]$ for $P(F^* \otimes G)_x$ so that $v_1$ is parallel to $\gamma(x)$ in $(F^* \otimes G)_x$ (not vanishing by assumption).  Then in the standard local coordinates for the chart $v_1 \neq 0$ one sees that $A_{t,x}$ is a ball of real dimension $2r - 2$, centered at $[h(x)]$, and with radius asymptotic to $\e^{t/2}$ when $t \to -\infty$.  Notice that the volume of $A_{t,x}$ is then asymptotic to $V_t = \frac{\pi^{r-1}}{(r-1)!} \e^{(r-1)t}$.

Let $A_t$ be the subset of $P(F^* \otimes G)$ given by the union of all $A_{t,x}$ for $x \in X$. We then write
\[
    \norm{\hat{s}}_t^2 = \one_t + \two_t,
\]
with
\[
    \one_t := r_F \fint_{A_t} \frac{h_G(vs, \overline{vs})}{h(v,\bar{v}) h(\gamma,\bar\gamma)^{\alpha(r-1)}} \wedge (\iota_x^*\omega)^{\wedge (r - 1)}
\]
and
\[
    \two_t := \frac{r_F}{V_t} \int\limits_{x \in X} \int\limits_{[v] \in P(F^* \otimes G)_x \setminus A_{t,x}} \frac{h_G(vs, \overline{vs}) \e^{\alpha(r-1)t}}{h(v,\bar{v}) h(\gamma,\bar\gamma)^{\alpha(r-1)}} \wedge \frac{(\iota_x^*\omega)^{\wedge (r - 1)}}{\left( 1 - \frac{|h(\gamma,\bar{v})|^2}{h(\gamma,\bar{\gamma})h(v,\bar{v})}\right)^{\alpha(r-1)}}.
\]

Integrating along the fibers $P(F^* \otimes G)_x$ first one obtains
\begin{equation}\label{eq:It}
    \lim_{t \to -\infty} \one_t = r_F \int_X \frac{h_G(\gamma s, \overline{\gamma s})}{h(\gamma,\bar{\gamma})^{\alpha(r-1)+1}}
\end{equation}
since the integral along each fiber is the average over the ball $A_{t,x}$ as the radius of the ball goes to zero.  The second term can be rewritten as 
\[
    \two_t = \e^{-(r-1)t} \int_t^0 \e^{-\alpha(r-1)(\tilde{t}-t)} \dif\nu_s(\tilde{t})
\]
with $\nu_s(t) = \e^{(r-1)t} \one_t$.

We now recall the following calculus lemma (see \cite[Section 3]{Albesiano2024} for the proof).
\begin{lemma}
    Let $\nu: (-\infty,0] \rightarrow \mathbb{R}_+$ be an absolutely continuous increasing function such that
    \[
      \lim_{t \rightarrow -\infty} \e^{-Bt} \nu(t) = A < +\infty
    \]
    for some $B > 0$. Then
    \[
      \lim_{t \rightarrow -\infty} \e^{-Bt} \int_t^0 \e^{-p(s-t)} \dif \nu(s) = \frac{AB}{p-B}
    \]
    for all $p>B$.
\end{lemma}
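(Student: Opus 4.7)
The plan is to reduce the limit to a direct application of dominated convergence after integration by parts.  First I would change variables by setting $u := s - t$, rewriting
\[
    \e^{-Bt} \int_t^0 \e^{-p(s-t)} \dif\nu(s) = \int_0^{-t} \e^{-pu} \dif G_t(u),
\]
where $G_t(u) := \e^{-Bt} \nu(t+u)$; this function is absolutely continuous in $u$, being a rescaled translate of $\nu$.

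Integration by parts then gives
\[
    \int_0^{-t} \e^{-pu} \dif G_t(u) = \e^{pt} G_t(-t) - G_t(0) + p \int_0^{-t} \e^{-pu} G_t(u) \dif u.
\]
The first boundary term equals $\e^{(p-B)t} \nu(0)$, which vanishes as $t \to -\infty$ because $p > B$.  The second boundary term $-G_t(0) = -\e^{-Bt} \nu(t)$ tends to $-A$ by hypothesis.

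For the remaining integral, I would write $G_t(u) = \e^{Bu} \cdot [\e^{-B(t+u)} \nu(t+u)]$, so that for each fixed $u \geq 0$ one has $G_t(u) \to A \e^{Bu}$ as $t \to -\infty$.  Moreover, the function $s \mapsto \e^{-Bs} \nu(s)$ is continuous on $(-\infty, 0]$ and admits a finite limit at $-\infty$, hence is bounded by some constant $C < +\infty$; this yields the uniform estimate $|\e^{-pu} G_t(u)| \leq C \e^{(B-p)u}$ on $[0,-t]$, with right-hand side integrable on $(0,+\infty)$ because $p > B$.  Extending the integrand by zero outside $[0,-t]$ and invoking the dominated convergence theorem produces
\[
    p \int_0^{-t} \e^{-pu} G_t(u) \dif u \longto p \int_0^{+\infty} A \e^{(B-p)u} \dif u = \frac{pA}{p-B}.
\]

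Summing the three contributions yields $0 - A + \tfrac{pA}{p-B} = \tfrac{AB}{p-B}$, as required.  The only delicate point in this scheme is the uniform domination needed to pass to the limit inside the integral, but this follows immediately from the existence of the finite limit $A$ combined with the continuity of $\e^{-Bs}\nu(s)$ on $(-\infty, 0]$.
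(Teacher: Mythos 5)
Your proof is correct. The paper itself does not reproduce a proof of this lemma — it cites Section~3 of the author's earlier paper \cite{Albesiano2024} — so a line-by-line comparison is not possible here, but your argument is sound and is the natural one: the change of variable $u = s - t$ recenters the integral, integration by parts isolates the boundary term $-G_t(0) \to -A$ and kills the other boundary term because $p > B$, and the dominating function $C\,e^{(B-p)u}$ (valid since $e^{-Bs}\nu(s)$ is continuous on $(-\infty,0]$ with a finite limit at $-\infty$, hence bounded) justifies passing to the limit inside $p\int_0^{\infty} e^{-pu}G_t(u)\,\dif u \to pA/(p-B)$. The three contributions indeed sum to $AB/(p-B)$.
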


Hence
\[
    \lim_{t \to -\infty} \two_t = \frac{r_F}{\alpha-1} \int_X \frac{h_G(\gamma s, \overline{\gamma s})}{h(\gamma,\bar{\gamma})^{\alpha(r-1)+1}}
\]
so that overall 
\begin{equation}\label{eq:limit}
    \lim_{t \to -\infty} \norm{\hat{s}}_t^2 = r_F \frac{\alpha}{\alpha-1} \int_X \frac{h_G(\gamma s, \overline{\gamma s})}{h(\gamma,\bar{\gamma})^{\alpha(r-1)+1}} = r_F \frac{\alpha}{\alpha-1} \norm{\gamma s}_G^2,
\end{equation}
retrieving a multiple of the norm squared of the image of $s$ under $\gamma$.

\section{Curvature}\label{sec:curvature}
In order to apply the Berndtsson--Lempert machinery we see the family of metrics $\mathfrak{h}_\tau$ as a single metric $\mathfrak{h}$ for 
\[
    \pr_{P(F^* \otimes G)}^* V \longto P(F^* \otimes G) \times \mathbb{L}
\]
defined by $\mathfrak{h}|_{P(F^* \otimes G) \times \{\tau\}} := \mathfrak{h}_\tau$.  The degeneration argument that will ultimately give us the estimate in the \mainthmref is deferred to \autoref{sec:convexity}.  In this section we only check that the curvature of $\mathfrak{h}$ satisfies the hypotheses of D.~Varolin's vector bundle version of Berndtsson's theorem \cite[Theorem 2]{Varolin2025}, i.e.~that the curvature of $\mathfrak{h}$ plus the curvature induced by $(\iota_x^* \omega)^{\wedge (r - 1)}$ is non-negative in the sense of Griffiths on the total space $P(F^* \otimes G) \times \mathbb{L}$, and non-negative in the sense of Nakano on each fiber $P(F^* \otimes G) \times \{\tau\}$.  

We compute the curvature of $\mathfrak{h}$ to be 
\[
    \Theta_{\mathfrak{h}} = \Theta_{h_G} + \alpha(r-1)\I\ddbar\chi + (1 - \alpha(r-1))\I\ddbar\log h(v,\bar{v}).
\]

To compute the curvature of the volume form $(\iota_x^*\omega)^{\wedge (r-1)}$ at a point $([v],\tau) \in P(F^* \otimes G) \times \mathbb{L}$, we choose a holomorphic frame $\epsilon^1, \dots, \epsilon^r$ for $F^* \otimes G$ on an open neighborhood $U \subset X$ of $\pr_X([v])$.  This fixes homogeneous coordinates on $P(F^* \otimes G)|_U$: vectors in $(F^* \otimes G)_x$ ($x \in U$) can be written as $v = v_1 \epsilon^1_x + \dots + v_r \epsilon^r_x$ and then $[v_1 : \dots : v_r]$ are the homogeneous coordinates of $[v] \in P(F^* \otimes G)_x$.  Let $H^{j\bar{k}}_x := h(\epsilon^j_x, \bar\epsilon^k_x)$, and choose Cartesian coordinates $z_j := v_j/v_r$ in the chart $\{v_r \neq 0\}$ of $P(F^* \otimes G)|_U$.  Then 
\[
\begin{split}
    g^{j\bar{k}} :=& \, \partial_{z_j}\partial_{\bar{z}_k} \log h(v,\bar{v}) \\
    =& \, \partial_{z_j}\partial_{\bar{z}_k} \log \left( H^{\ell\bar{m}} z_\ell \bar{z}_m  + H^{r\bar{m}} \bar{z}_m + H^{\ell\bar{r}} z_\ell + H^{r\bar{r}} \right) \\
    =& \, \partial_{z_j} \frac{H^{\ell\bar{k}}z_\ell + H^{r\bar{k}}}{H^{\ell\bar{m}} z_\ell \bar{z}_m  + H^{r\bar{m}} \bar{z}_m + H^{\ell\bar{r}} z_\ell + H^{r\bar{r}}} \\
    =& \, \frac{H^{j\bar{k}}}{H^{\ell\bar{m}} z_\ell \bar{z}_m  + H^{r\bar{m}} \bar{z}_m + H^{\ell\bar{r}} z_\ell + H^{r\bar{r}}} - \frac{(H^{\ell\bar{k}}z_\ell + H^{r\bar{k}}) (H^{j\bar{m}}\bar{z}_m + H^{j\bar{r}})}{(H^{\ell\bar{m}} z_\ell \bar{z}_m  + H^{r\bar{m}} \bar{z}_m + H^{\ell\bar{r}} z_\ell + H^{r\bar{r}})^2}.
\end{split}
\]
Write
\[
H = \begin{pmatrix}
    A & b \\ b^\dagger & H^{r\bar{r}}
\end{pmatrix},
\]
where $A$ is the submatrix of the first $(r-1) \times (r-1)$ entries and $b$ is the vector of the first $r-1$ entries of the last column.  Then
\[
    g = \frac{A - ww^\dagger}{(z,1)^\dagger H(z,1)} \quad \text{with } w := \frac{Az + b}{\sqrt{(z,1)^\dagger H(z,1)}}
\]
By the matrix determinant lemma we have
\[
    \det g = \frac{1 - w^\dagger A^{-1} w}{((z,1)^\dagger H(z,1))^{r-1}} \det A.
\]
Since 
\[
    1 - w^\dagger A^{-1} w = 1 - \frac{z^\dagger A z + z^\dagger b + b^\dagger z + b^\dagger A^{-1} b}{(z,1)^\dagger H(z,1)} = \frac{H^{r\bar{r}} - b^\dagger A^{-1}b}{(z,1)^\dagger H (z,1)},
\]
we conclude by Schur's formula that 
\[
    \det g = \frac{\det H}{((z,1)^\dagger H(z,1))^r},
\]
and so the volume form in the chart $\{v_r \neq 0\}$ is
\[
    (\iota_x^* \omega)^{\wedge(r-1)} = \left(\frac{\I}{2}\right)^{r-1} (r-1)! \frac{\det H}{((z,1)^\dagger H(z,1))^r} \dif z \wedge \dif\bar{z}.
\]
Therefore the curvature introduced by the volume form is
\[
    - \I\ddbar\log\det h + r \I\ddbar\log h(v,\bar{v}) = \Theta_{\det h} + r \I\ddbar\log h(v,\bar{v}).
\]
Recalling that $h = h_F^* \otimes h_G$ and that $\det(h_F^* \otimes h_G) = (\det h_F^*)^{\otimes r_G} \otimes (\det h_G)^{\otimes r_F}$, we notice that
\[
    \Theta_{\det h} = r_F \Theta_{\det h_G} - r_G \Theta_{\det h_F}.
\]

In order to satisfy the curvature conditions of Berndtsson--Varolin's theorem it then suffices to show that 
\begin{equation}\label{eq:curvNak}
    \Theta_{(h_G \otimes \det h_G) \e^{-\eta}} \geqNak 0,
\end{equation}
with
\[
\begin{split}
    \eta :=& \alpha(r-1)\chi + (r+1 - \alpha(r-1)) \I\log h(v,\bar{v}) \\
    &\quad- (r_F - 1) \log\det h_G + r_G \log\det h_F.
\end{split}
\]

We now recall the following theorem of J.-P.~Demailly and H.~Skoda \cite{DemaillySkoda1980}.
\begin{theorem*}
    Let $E \to X$ be a holomorphic vector bundle, and let $h$ be a Hermitian metric with non-negative curvature in the sense of Griffiths.  Then the metric $h \otimes \det h$ for the vector bundle $E \otimes \det E \to X$ is non-negative in the sense of Nakano.
\end{theorem*}
By Demailly--Skoda's theorem, to show \eqref{eq:curvNak} it suffices to show that $h_G \e^{-\frac{\eta}{r_G +1}}$ has non-negative curvature in the sense of Griffiths, i.e.
\begin{equation}\label{eq:curvGrif}
    \Theta_{h_G} + \frac{1}{r_G + 1} \I\ddbar\eta \geqGrif 0.
\end{equation}

Recall that $\chi = \max\left(\chi^{(1)} - \Re\tau, \chi^{(2)}\right)$, where 
\[
\begin{split}
    \chi^{(1)} &= \log\left[ h(\gamma,\bar\gamma) h(v,\bar{v}) - |h(\gamma,\bar{v})|^2\right] \\
    &= \log\left[ h^{\otimes2}\left(\gamma \otimes v - v \otimes \gamma, \overline{\gamma \otimes v - v \otimes \gamma}\right) \right] - \log 2
\end{split} 
\]
and
\[
    \chi^{(2)} = \log h(\gamma,\bar\gamma) + \log h(v,\bar{v}),
\]
so condition \eqref{eq:curvGrif} is satisfied when
\begin{equation}\label{eq:curvGrif1}
\begin{split}
    (r_G &+ 1)\Theta_{h_G} + \alpha(r-1)\I\ddbar\log\left[ h^{\otimes2}\left(\gamma \otimes v - v \otimes \gamma, \overline{\gamma \otimes v - v \otimes \gamma}\right) \right] \\
    &+ (r+1 - \alpha(r-1)) \I\ddbar\log h(v,\bar{v}) \\
    &+ (r_F - 1) \Theta_{\det h_G} - r_G \Theta_{\det h_F} \geqGrif 0
\end{split}
\end{equation}
and
\begin{equation}\label{eq:curvGrif2}
\begin{split}
    (r_G &+ 1)\Theta_{h_G} + \alpha(r-1)\I\ddbar\log h(\gamma,\bar{\gamma}) \\
    &+ (r+1) \I\ddbar\log h(v,\bar{v}) \\
    &+ (r_F - 1) \Theta_{\det h_G} - r_G \Theta_{\det h_F} \geqGrif 0.
\end{split}
\end{equation}

\begin{remark}
    Notice that by construction we have
    \[
        h^{\otimes2}\left(\gamma \otimes v - v \otimes \gamma, \overline{\gamma \otimes v - v \otimes \gamma}\right) \geq 2\e^{\Re\tau} h(\gamma,\bar{\gamma}) h(v,\bar{v})
    \]
    in the region where $\chi = \chi^{(1)} - \Re\tau$.  Hence in the computations of curvature in \eqref{eq:curvGrif1} we can always assume that $[v] \neq [\gamma]$.
\end{remark}

We now study conditions \eqref{eq:curvGrif1} and \eqref{eq:curvGrif2} more carefully.  To start we claim that we can reduce these curvature conditions on $P(F^* \otimes G)$ with an analogous curvature condition on $X$ by replacing $v$ by a section $w$ of $F^* \otimes G \to X$.  Indeed, Griffiths positivity can be checked on germs of curves in $P(F^* \otimes G)$.  If the germ project to zero on $X$ we can just restrict to a curve $C$ in the fiber.  In such case \eqref{eq:curvGrif2} reduces to
\[
    \I\ddbar \log h(v,\bar{v})|_C \geq 0,
\]
which is automatically satisfied, and \eqref{eq:curvGrif1} then reduces to
\[
    \I\ddbar\log\left[ h^{\otimes2}\left(\gamma \otimes v - v \otimes \gamma, \overline{\gamma \otimes v - v \otimes \gamma}\right) \right]\bigg|_C \geq 0,
\]
also automatically satisfied since on $C$ we have
\[
\begin{split}
    &\I\ddbar\log\left[ h^{\otimes2}\left(\gamma \otimes v - v \otimes \gamma, \overline{\gamma \otimes v - v \otimes \gamma}\right) \right] \\
    &\quad= \I\partial\frac{h^{\otimes2}(\gamma \otimes v - v \otimes \gamma, \bar{\gamma} \otimes \dif\bar{v} - \dif\bar{v} \otimes \gamma)}{h^{\otimes2}\left(\gamma \otimes v - v \otimes \gamma, \overline{\gamma \otimes v - v \otimes \gamma}\right)} \\
    &\quad= \frac{\I}{\left|\gamma \otimes v - v \otimes \gamma\right|_{h^{\otimes2}}^4} \begin{bmatrix}\left|\gamma \otimes \dif v - \dif v \otimes \gamma\right|_{h^{\otimes2}}^2 \left|\gamma \otimes v - v \otimes \gamma\right|_{h^{\otimes2}}^2 \qquad\quad \\ \quad\qquad- \left|h^{\otimes2}(\gamma \otimes v - v \otimes \gamma, \overline{\gamma \otimes \dif v - \dif v \otimes \gamma})\right|^2 \end{bmatrix},
\end{split}
\]
which is non-negative by the Cauchy--Schwarz inequality.

If the germ does not project to zero we can instead choose a curve $C$ contained in the image in $P(F^* \otimes G)$ of the graph of some section $w$ of $F^* \otimes G \to X$ with $[w_x] = v$ and $(\nabla^{1,0} w)_x = 0$.  Then one notices that 
\[
    \I\ddbar\log h(w,\bar{w})_x = -\frac{h(\Theta_h w_x,\bar{w}_x)}{h(w_x,\bar{w}_x)} = -\frac{h(\Theta_h v,\bar{v})}{h(v,\bar{v})}
\]
in the sense of $(1,1)$-forms.  Condition \eqref{eq:curvGrif1} then becomes
\[
\begin{split}
    (r_G &+ 1)\Theta_{h_G} + (r_F - 1) \Theta_{\det h_G} - r_G \Theta_{\det h_F} \\
    &\quad\geqGrif \alpha(r-1) \frac{h^{\otimes2}( \Theta_{h^{\otimes2}} (\gamma \otimes v - v \otimes \gamma), \overline{\gamma \otimes v - v \otimes \gamma})}{h^{\otimes 2}(\gamma \otimes v - v \otimes \gamma, \overline{\gamma \otimes v - v \otimes \gamma})} \\
    &\qquad\qquad + (r+1 - \alpha(r-1)) \frac{h(\Theta_hv,\bar{v})}{h(v,\bar{v})},
\end{split}
\]
which means
\begin{equation}\label{eq:curvEig1}
\begin{split}
    (r_G &+ 1)\frac{h_G(\Theta_{h_G}u, \bar{u})}{h_G(u,\bar{u})} + (r_F - 1) \Theta_{\det h_G} - r_G \Theta_{\det h_F} \\
    &\quad\geq \alpha(r-1) \frac{h^{\otimes2}( \Theta_{h^{\otimes2}} (\gamma \otimes v - v \otimes \gamma), \overline{\gamma \otimes v - v \otimes \gamma})}{h^{\otimes 2}(\gamma \otimes v - v \otimes \gamma, \overline{\gamma \otimes v - v \otimes \gamma})} \\
    &\qquad\quad + (r+1 - \alpha(r-1)) \frac{h(\Theta_hv,\bar{v})}{h(v,\bar{v})}
\end{split}
\end{equation}
in the sense of $(1,1)$-forms for all $u \in G_x$, $v \in (F^* \otimes G)_x$, and $x \in X$.

\begin{remark}\label{rmk:alphaUpperBound}
    The proof in \cite{Albesiano2024} introduced an upper bound on $\alpha$ because the extra positivity in the fiber direction coming from $\chi^{(0)}$ was given up.  Computing the curvature more carefully as above allows to remove the artificial upper bound by compensating the negative term $-\alpha(r-1)\I\ddbar\log h(v,\bar{v})$ with this extra positivity.  Note that, albeit not explicitly, the negative term in the author's previous work was already compensated by the positivity along fibers of the weight at the points where $\chi = \chi^{(2)}$.
\end{remark}

In the same way condition \eqref{eq:curvGrif2} becomes
\[
\begin{split}
    (r_G &+ 1)\Theta_{h_G} + (r_F - 1) \Theta_{\det h_G} - r_G \Theta_{\det h_F} \\
    &\quad\geqGrif \alpha(r-1) \frac{h(\Theta_h \gamma, \bar{\gamma})}{h(\gamma, \bar{\gamma})} + (r+1) \frac{h(\Theta_h v, \bar{v})}{h(v,\bar{v})},
\end{split}
\]
which means
\begin{equation}\label{eq:curvEig2}
\begin{split}
    (r_G &+ 1)\frac{h_G(\Theta_{h_G}u, \bar{u})}{h_G(u,\bar{u})} + (r_F - 1) \Theta_{\det h_G} - r_G \Theta_{\det h_F} \\
    &\quad\geq \alpha(r-1) \frac{h(\Theta_h \gamma, \bar{\gamma})}{h(\gamma, \bar{\gamma})} + (r+1) \frac{h(\Theta_h v, \bar{v})}{h(v,\bar{v})}
\end{split}
\end{equation}
in the sense of $(1,1)$-forms for all $u \in G_x$, $v \in (F^* \otimes G)_x$, and $x \in X$.

By the definitions and properties of $\lambda$ and $\Lambda$, it then suffices to show 
\[
\begin{split}
    (r_G + 1) &\lambda_{h_G} + (r_F - 1) \Tr\Theta_{h_G} - r_G \Tr\Theta_{h_F} \\
    &\geq \alpha(r-1) \Lambda_{h^{\otimes2}} + (r+1 - \alpha(r-1)) \Lambda_h = (r+1 + \alpha(r-1)) \Lambda_h
\end{split}
\]
for \eqref{eq:curvEig1}, and 
\[
\begin{split}
    (r_G + 1) \lambda_{h_G} + &(r_F - 1) \Tr\Theta_{h_G} - r_G \Tr\Theta_{h_F} \\
    &\geq \alpha(r-1) \Lambda_{h} + (r+1) \Lambda_h = (r+1 + \alpha(r-1)) \Lambda_h
\end{split}
\]
for \eqref{eq:curvEig2}.  Since 
\[
    \Lambda_h = \Lambda_{h_F^* \otimes h_G} = \Lambda_{h_F^*} + \Lambda_{h_G} = \Lambda_{h_G} - \lambda_{h_F},
\]
we conclude that the requirements on curvature are satisfied when 
\[
    (r_G + 1) \lambda_{h_G} + (r_F - 1)\Tr\Theta_{h_G} - r_G \Tr\Theta_{h_F} \geq (r+1 + \alpha(r-1)) (\Lambda_{h_G} - \lambda_{h_F}),
\]
as in the assumptions of the \mainthmref.

\section{Convexity of the degenerating family}\label{sec:convexity}
The argument to complete the proof of the \mainthmref is at this point fairly standard, and follows the lines of \cite{BerndtssonLempert2016}.

We know that the metric $\mathfrak{h}$ for $\pr_{P(F^* \otimes G)}^* V \to P(F^* \otimes G) \times \mathbb{L}$ has non-negative curvature in the sense of Nakano (\autoref{sec:curvature}), and induces a family of norms $\left\{\,\norm{\cdot}_\tau\right\}_{\tau \in \mathbb{L}}$ such that $\norm{\hat{s}}_0^2 = \norm{s}_F^2$ and $\lim_{\Re\tau \to -\infty} \norm{\hat{s}}_{\Re\tau}^2 = r_F \frac{\alpha}{\alpha-1} \norm{\gamma s}_G^2$ (\autoref{sec:family}).

For each $\tau \in \mathbb{L}$ define 
\[
    \mathcal{H}_\tau := \left\{ \sigma \in H^0(P(F^* \otimes G), V) \, \middle| \,\, \norm{\sigma}_\tau^2 < +\infty \right\}.
\]
By Varolin's vector bundle version \cite[Theorem 2]{Varolin2025} of Berndtsson's theorem \cite[Theorem 1]{Berndtsson2009}, $\mathcal{H} \to \mathbb{L}$ is a holomorphic vector bundle, and the metric induced by $\mathfrak{h}$ to $\mathcal{H} \to \mathbb{L}$ is non-negatively curved in the sense of Nakano.  In particular the map 
\[
    \mathbb{L} \ni \tau \longmapsto \log\norm{\xi}_{\tau,*}^2
\]
is subharmonic for all functionals $\xi$ on the space of $L^2$-integrable holomorphic sections of $F \to X$.  Since this map only depends on $t = \Re\tau$, it follows that the map
\[
    (-\infty,0) \ni t \longmapsto \log\norm{\xi}_{t,*}^2
\]
is convex for all $\xi$, so that in particular $t \mapsto \log\norm{\xi_\eta}_{t,*}^2$ is convex for all $\eta \in C^\infty_c(X,G \otimes K_X)$.

We now claim that $t \mapsto \log\norm{\xi_\eta}_{t,*}^2$ is bounded for all $\eta$.  Indeed
\[
    \norm{\xi_\eta}_{t,*}^2 = \sup_{\norm{\hat{s}}_t^2 = 1} \left| \int_X \frac{h_G(\gamma(s),\bar{\eta})}{h(\gamma,\bar{\gamma})^{\alpha(r - 1) + 1}} \right|^2 \leq \norm{\eta}_G^2 \sup_{\norm{\hat{s}}_t^2 = 1} \int_X \frac{h_G(\gamma(s),\overline{\gamma(s)})}{h(\gamma,\bar{\gamma})^{\alpha(r-1)+1}}.
\]
Notice that $\norm{\eta}_G^2 < +\infty$ because $\eta$ has compact support.  Moreover, by \eqref{eq:It} we have 
\[
    \int_X \frac{h_G(\gamma(s),\overline{\gamma(s)})}{h(\gamma,\bar{\gamma})^{\alpha(r-1)+1}} \leq \frac{2}{r_F} \one_t \leq \frac{2}{r_F} \norm{\hat{s}}_t^2 = \frac{2}{r_F}
\]
for all $t$ sufficiently negative.  Hence $\norm{\xi_\eta}_{t,*}^2 \leq \frac{2}{r_F} \norm{\eta}_G^2$ for all $t$ negative enough, and thus $t \mapsto \log\norm{\xi}_{t,*}^2$ is bounded for all $\eta$.

Because $(-\infty,0] \ni t \mapsto \log\norm{\xi}_{t,*}^2$ is bounded and convex, it must be increasing, so in particular
\[
    \norm{\xi_\eta}_{F,*}^2 = \norm{\xi_\eta}_{0,*}^2 \geq \lim_{t \to -\infty} \norm{\xi_\eta}_{t,*}^2
\]
for all $\eta \in C^\infty_c(X,G \otimes K_X)$.

Let now $s$ be any holomorphic section of $F \otimes K_X \to X$ with finite $L^2$ norm such that $\gamma(s) = \mathcal{P} \eta$ (we know that such section $s$ exists by \autoref{prop:existence}).  Then 
\[
    \norm{\xi_\eta}_{F,*}^2 \geq \lim_{t \to -\infty} \norm{\xi_\eta}_{t,*}^2 \geq \lim_{t \to -\infty} \frac{|\xi_\eta(s)|^2}{\norm{\hat{s}}_t^2} = \lim_{t \to -\infty} \frac{\norm{\mathcal{P}\eta}_G^4}{\norm{\hat{s}}_t^2} = \frac{\alpha-1}{r_F \alpha} \norm{\mathcal{P}\eta}_G^2,
\]
where the last equality comes from \eqref{eq:limit}.  We then conclude by \eqref{eq:dualProblem} that the minimal-norm solution $f$ has norm 
\[
    \norm{f}_F^2 \leq r_F \frac{\alpha}{\alpha-1} \norm{g}_G^2,
\]
proving the \mainthmref.

\end{document}